\let\oldmarginpar\marginpar
\renewcommand\marginpar[1]{\-\oldmarginpar[\raggedleft\footnotesize #1]%
{\raggedright\footnotesize #1}}
\begin{document}

\newtheorem{theorem}{Theorem}[section]
\newtheorem{corollary}[theorem]{Corollary}
\newtheorem{lemma}[theorem]{Lemma}
\newtheorem{proposition}[theorem]{Proposition}
\theoremstyle{definition}
\newtheorem{definition}[theorem]{Definition}
\theoremstyle{remark}
\newtheorem{remark}[theorem]{Remark}
\theoremstyle{definition}
\newtheorem{example}[theorem]{Example}

\def\rank{{\text{rank}\,}}

\numberwithin{equation}{section}

\title[Warped products in Riemannian manifolds]{Warped products in Riemannian manifolds}

\author{Kwang-Soon Park}
\address{Department of Mathematical Sciences, Seoul National University, Seoul 151-747, Republic of Korea}
\email{parkksn@gmail.com}
%\thanks{The research of the first author was supported by the National Research Foundation of Korea (NRF)
%grant funded by the Korea government (MEST)(No. 2009-0057445).}

\keywords{warped products, squared mean curvature, warping function, second fundamental form}

\subjclass[2000]{53C40; 53C42; 53C50.}   %Primary 57M50; Secondary 57N16, 53A20, 53C15}

\begin{abstract}
In this paper we prove two inequalities relating the warping function to various curvature terms, for warped products isometrically immersed in Riemannian manifolds. This extends work of B. Y. Chen for the case of immersions into space forms. Finally we give an application where the target manifold is the Clifford torus.
\end{abstract}

\maketitle
\section{Introduction}\label{intro}
\addcontentsline{toc}{section}{Introduction}

Let $(B, g_B)$ and $(F, g_F)$ be Riemannian manifolds, where $g_B$ and $g_F$ are Riemannian metrics on manifolds $B$ and $F$, respectively.
Let $f$ be a positive differentiable function on $B$. Consider the product manifold $B\times F$ with the natural projections $\pi_1 : B\times F \mapsto B$
 and $\pi_2 : B\times F \mapsto F$. The {\it warped product manifold} $M=B\times_f F$ is the product manifold $B\times F$ equipped with the Riemannian
 metric $g$ such that
$$
|| X ||^2 = || d\pi_1 X ||^2 + f^2(\pi_1(x)) || d\pi_2 X ||^2
$$
for any tangent vector $X\in T_x M$, $x\in M$. Thus, we get $g = g_B + f^2g_F$. The function $f$ is called the {\it warping function} of the warped product
manifold $M$ \cite {C6}.

As we know, warped product manifolds play important roles in differential geometry and in physics, particularly in general relativity. And there are
lots of papers on this topic (\cite {C6}, references therein). According to the result of J. F. Nash \cite {N}, which says that every Riemannian manifold can be isometrically embedded in some Euclidean space, every warped product can be isometrically embedded in some Euclidean space. As a generalization of B. Y. Chen's
results (\cite {C2}, \cite {C3}), in this paper we obtain the main results: Theorem \ref {thm1}, Theorem \ref {thm2}.

The paper is organized as follows. In section 2 we recall some notions, which are needed in the following sections. In section 3 we obtain the main results
from which we can see both the upper bound and the lower bound of the function $\frac{\triangle f}{f}$.
In section 4 we give some applications.

\section{Preliminaries}\label{prelim}

Let $(M, g)$ be a $m$-dimensional Riemannian manifold and $N$ a $n$-dimensional submanifold of $(M, g)$. Denote by $\nabla$ and $\overline{\nabla}$
the Levi-Civita connections of $N$ and $M$, respectively. The {\it Gauss} and {\it Weingarten formulas} are given by
\begin{eqnarray}
  \overline{\nabla}_X Y & = & \nabla_X Y + h(X, Y), \label{eq: gauss} \\
  \overline{\nabla}_X Z & = & -A_Z X + D_X Z, \label{eq: weing}
\end{eqnarray}
respectively, for tangent vector fields $X,Y\in \Gamma(TN)$ and normal vector field $Z\in \Gamma(TN^{\perp})$, where $h$ denotes the {\it second fundamental
form}, $D$ the {\it normal connection}, and $A$ the {\it shape operator} of $N$ in $M$.

Then the second fundamental form and the shape operator are related by
\begin{equation}\label{eq: shape}
\langle A_Z X, Y \rangle = \langle h(X, Y), Z \rangle,
\end{equation}
where $\langle \ , \ \rangle$ denotes the induced metric on $N$ as well as the Riemannian metric $g$ on $M$. Choose a local orthonormal frame
$\{ e_1, \cdots, e_m \}$ of $TM$ such that $e_1, \cdots, e_n$ are tangent to $N$ and $e_{n+1}, \cdots, e_m$ are normal to $N$.

Then the {\it mean curvature vector} $\overrightarrow{H}$ is defined by
\begin{equation}\label{eq: mean}
\overrightarrow{H} = \frac{1}{n} trace \ h = \frac{1}{n}\sum_{i=1}^n h(e_i, e_i)
\end{equation}
and the {\it squared mean curvature} is given by $H^2 := \langle \overrightarrow{H}, \overrightarrow{H} \rangle$.

The {\it squared norm of the second
fundamental form} $h$ is given by
\begin{equation}\label{eq: sqnorm}
|| h ||^2 = \sum_{i,j=1}^n \langle h(e_i, e_j), h(e_i, e_j) \rangle.
\end{equation}
Let $h_{ij}^r := \langle h(e_i, e_j), e_r \rangle$ for $1\leq i,j\leq n$ and $n+1\leq r\leq m$.

A submanifold $N$ is said to be {\it totally geodesic} in $M$ if the second fundamental form of $N$ in $M$ vanishes identically. Denote by $K(\pi)$ and
$\overline{K}(\pi)$ the sectional curvatures of a plane $\pi \subset T_p N$, $p\in N$, in $N$ and in $M$, respectively. i.e., if the plane $\pi$ is spanned
by vectors $X,Y\in T_p N$, then we have
$$
K(\pi) = \frac{\langle R(X,Y)Y, X \rangle}{\langle X, X \rangle \cdot \langle Y, Y \rangle - \langle X, Y \rangle^2} \ \text{and} \
\overline{K}(\pi) = \frac{\langle \overline{R}(X,Y)Y, X \rangle}{\langle X, X \rangle \cdot \langle Y, Y \rangle - \langle X, Y \rangle^2},
$$
where $R$ and $\overline{R}$ are the Riemann curvature tensors of $N$ and $M$, respectively.

The {\it scalar curvature} $\tau$ of $N$ is defined by
\begin{equation}\label{eq: scalar}
\tau = \sum_{1\leq i<j\leq n} K(e_i\wedge e_j),
\end{equation}
where $K(e_i\wedge e_j) = \langle R(e_i,e_j)e_j, e_i \rangle$ for $1\leq i,j\leq n$.

Let
\begin{equation}\label{eq: inf}
(\inf \overline{K})(p) := \inf \{ \overline{K}(\pi) \mid \pi \subset T_p N, \dim \pi = 2 \}
\end{equation}
and
\begin{equation}\label{eq: sup}
(\sup \overline{K})(p) := \sup \{ \overline{K}(\pi) \mid \pi \subset T_p N, \dim \pi = 2 \}
\end{equation}
for $p\in N$.

The {\it Gauss equation} is given by
\begin{equation}\label{eq: gausseq}
R(X,Y,Z,W) = \overline{R}(X,Y,Z,W) + \langle h(X,W), h(Y,Z) \rangle - \langle h(X,Z), h(Y,W) \rangle
\end{equation}
for tangent vectors $X,Y,Z,W\in T_p N$, $p\in N$, where $R(X,Y,Z,W) = \langle R(X,Y)Z, W \rangle$ and
$\overline{R}(X,Y,Z,W) = \langle \overline{R}(X,Y)Z, W \rangle$.

Then we easily obtain
\begin{equation}\label{eq: scalar2}
\sum_{i,j=1}^n \overline{K}(e_i\wedge e_j) = 2\tau + || h ||^2 - n^2 H^2.
\end{equation}

The {\it Laplacian} of a differentiable function $f$ on $N$ is defined by
\begin{equation}\label{eq: lapl}
\triangle f = \sum_{i=1}^n ((\nabla_{e_i} e_i)f - e_i^2 f).
\end{equation}

\section{Some inequalities}\label{inequal}

Let $(B, g_B)$ be a $n_1$-dimensional Riemannian manifold and $(F, g_F)$ a $n_2$-dimensional Riemannian manifold with $n = n_1 + n_2$.
Let $(M, g)$ be a warped product manifold of $(B, g_B)$ and $(F, g_F)$ such that $M = B\times_f F$ and $g = g_B + f^2g_F$ with the projections
$\pi_1 : B\times F \mapsto B$ and $\pi_2 : B\times F \mapsto F$ and $(\overline{M}, \overline{g})$ a $m$-dimensional Riemannian manifold.
Let  $\mathcal{D}_1$ and $\mathcal{D}_2$ denote the distributions in $M$ obtained from the vectors tangent to the horizontal lifts of $B$ and $F$, respectively.

Let $\phi : (M, g)\mapsto (\overline{M}, \overline{g})$ be an isometric immersion. We choose a local orthonormal frame $\{ e_1, \cdots, e_n \}$ of
the tangent bundle $TM$ of $M$ such that $e_1, \cdots, e_{n_1}\in \Gamma(\mathcal{D}_1)$ and $e_{n_1+1}, \cdots, e_{n}\in \Gamma(\mathcal{D}_2)$.
Conveniently, we identify $d\phi(e_i)$ with $e_i$ for $1\leq i\leq n$.
We also choose a local orthonormal frame $\{ e_{n+1}, \cdots, e_m \}$ of the normal bundle $TM^{\perp}$ of $M$ in $\overline{M}$ via $\phi$ such that
$e_{n+1}$ is in the direction of the mean curvature vector field.

Then we have
\begin{equation}\label{eq: lapl2}
\triangle f = \sum_{i=1}^{n_1} ((\nabla_{e_i} e_i)f - e_i^2 f).
\end{equation}
Denote by $tr h_1$ and $tr h_2$ the trace of $h$ restricted to $B$ and $F$, respectively.
i.e.,
$$
tr h_1 = \sum_{i=1}^{n_1} h(e_i,e_i) \quad \text{and} \quad tr h_2 = \sum_{j=n_1+1}^{n} h(e_j,e_j).
$$
Given unit vector fields $X,Y\in \Gamma(TM)$ such that $X\in \Gamma(\mathcal{D}_1)$ and $Y\in \Gamma(\mathcal{D}_2)$, we easily obtain
\begin{equation}\label{eq: warp2}
\nabla_X Y = \nabla_Y X = (X \ln f)Y,
\end{equation}
where $\nabla$ is the Levi-Civita connection of $(M, g)$, so that
\begin{eqnarray}
K(X\wedge Y)
& = & \langle \nabla_Y\nabla_X X - \nabla_X\nabla_Y X, Y \rangle   \label{sect3}    \\
& = & \frac{1}{f} ((\nabla_X X)f - X^2 f).  \nonumber
\end{eqnarray}
Hence,
\begin{equation}\label{eq: sect4}
\frac{\triangle f}{f} = \sum_{i=1}^{n_1} K(e_i\wedge e_j)
\end{equation}
for each $j = n_1+1, \cdots, n$.

The map $\phi$ is called {\it mixed totally geodesic} if $h(e_i, e_j) = 0$ for $1\leq i\leq n_1$ and $n_1+1\leq j\leq n$.

Then we get

\begin{theorem}\label{thm1}
Let $(M = B\times_f F, g)$ be a warped product manifold of Riemannian manifolds $(B, g_B)$ and $(F, g_F)$ with the warping function $f$ and
$(\overline{M}, \overline{g})$ a Riemannian manifold. Let $\phi : (M, g)\mapsto (\overline{M}, \overline{g})$ be an isometric immersion. Then we obtain
\begin{equation}\label{eq: ineq1}
\frac{\triangle f}{f} \leq \frac{n^2}{4n_2} H^2 + n_1 \sup \overline{K},
\end{equation}
where $n_1 = \dim B$ and $n_2 = \dim F$ with $n=n_1+n_2$. The equality case of (\ref {eq: ineq1}) holds identically if and only if $\phi$ is a mixed totally
geodesic immersion such that $tr h_1 = tr h_2$ and $\overline{K}(X\wedge Y) = \sup \overline{K}$ for unit vectors $X\in \Gamma(\mathcal{D}_1)$ and $Y\in \Gamma(\mathcal{D}_2)$.
\end{theorem}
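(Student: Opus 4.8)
The plan is to start from the pointwise identity \eqref{eq: sect4}, which expresses $\triangle f/f$ as the sum of the mixed sectional curvatures $\sum_{i=1}^{n_1} K(e_i\wedge e_j)$ for any single fixed index $j\in\{n_1+1,\dots,n\}$. Since that identity holds for each of the $n_2$ admissible values of $j$, I would sum over all such $j$ to obtain
$$n_2\,\frac{\triangle f}{f}=\sum_{i=1}^{n_1}\sum_{j=n_1+1}^{n}K(e_i\wedge e_j),$$
thereby converting the problem into an upper bound on the total mixed sectional curvature of $M$.

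Next I would replace each intrinsic curvature $K(e_i\wedge e_j)$ by the ambient one using the Gauss equation \eqref{eq: gausseq}: taking $X=W=e_i$ and $Y=Z=e_j$ gives $K(e_i\wedge e_j)=\overline{K}(e_i\wedge e_j)+\langle h(e_i,e_i),h(e_j,e_j)\rangle-\|h(e_i,e_j)\|^2$. Summing over the mixed index pairs splits the right-hand side into three groups. The curvature group is bounded above by $n_1 n_2\,\sup\overline{K}$, since there are exactly $n_1 n_2$ mixed planes. The last group $-\sum_{i,j}\|h(e_i,e_j)\|^2$ is nonpositive and will be discarded; its vanishing is precisely mixed total geodesy. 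The cross group collapses, by bilinearity of the inner product, to $\sum_{i=1}^{n_1}\sum_{j=n_1+1}^{n}\langle h(e_i,e_i),h(e_j,e_j)\rangle=\langle tr\,h_1,\,tr\,h_2\rangle$.

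The key algebraic step is to dominate $\langle tr\,h_1,\,tr\,h_2\rangle$. Using the polarization identity $\langle tr\,h_1,\,tr\,h_2\rangle=\tfrac14\bigl(\|tr\,h_1+tr\,h_2\|^2-\|tr\,h_1-tr\,h_2\|^2\bigr)\le\tfrac14\|tr\,h_1+tr\,h_2\|^2$, together with $tr\,h_1+tr\,h_2=\text{trace}\,h=n\overrightarrow{H}$, yields $\langle tr\,h_1,\,tr\,h_2\rangle\le\tfrac14 n^2H^2$. Combining the three estimates and dividing by $n_2$ produces \eqref{eq: ineq1}. The main obstacle here is mild but worth flagging: one must realize that the single fixed-$j$ identity has to be averaged over all $j$ to introduce the factor $n_2$, and that the mixed shape-operator term should be controlled by this sharp polarization bound rather than by a crude Cauchy--Schwarz estimate, since only the former keeps track of the equality condition $tr\,h_1=tr\,h_2$.

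Finally, equality in \eqref{eq: ineq1} forces equality in all three estimates simultaneously. Discarding the second-fundamental-form group with no loss requires $h(e_i,e_j)=0$ for all $1\le i\le n_1<j\le n$, i.e. $\phi$ is mixed totally geodesic; the polarization bound is sharp exactly when $\|tr\,h_1-tr\,h_2\|=0$, i.e. $tr\,h_1=tr\,h_2$; and the curvature bound is sharp when $\overline{K}(e_i\wedge e_j)=\sup\overline{K}$ for every mixed pair. Since the orthonormal frames within $\mathcal{D}_1$ and $\mathcal{D}_2$ may be chosen freely, any unit $X\in\Gamma(\mathcal{D}_1)$ and $Y\in\Gamma(\mathcal{D}_2)$ extend to such a frame, so this last condition is equivalent to $\overline{K}(X\wedge Y)=\sup\overline{K}$ for all such $X,Y$, recovering exactly the stated equality characterization.
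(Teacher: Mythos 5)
Your proof is correct, and it takes a genuinely more direct route than the paper's. The paper follows Chen's template: it introduces the scalar curvature $\tau$ and an auxiliary quantity $\delta = 2\tau - \sum_{i,j}\overline{K}(e_i\wedge e_j) - \frac{n^2}{2}H^2$, rewrites this as $n^2H^2 = 2\delta + 2\|h\|^2$, applies the scalar inequality $a_1^2+a_2^2\geq\frac12(a_1+a_2)^2$ only to the diagonal entries $h_{ii}^{n+1}$ in the mean-curvature direction, and then reaches the mixed curvature sum indirectly, writing $n_2\triangle f/f$ as $\tau$ minus the two pure-block sums (each converted by the Gauss equation) and discarding several nonnegative square terms in the remaining normal directions $r\geq n+2$. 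You instead apply the Gauss equation only to the $n_1n_2$ mixed planes, which gives the exact identity $n_2\,\triangle f/f = \sum\overline{K}(e_i\wedge e_s) + \langle tr\,h_1, tr\,h_2\rangle - \sum\|h(e_i,e_s)\|^2$, and control the middle term with the vector-valued polarization bound $\langle tr\,h_1, tr\,h_2\rangle \leq \frac14\|tr\,h_1+tr\,h_2\|^2 = \frac{n^2}{4}H^2$. The algebraic core is the same in both arguments ($xy\leq\frac14(x+y)^2$, i.e. $(x-y)^2\geq 0$), but your organization avoids $\tau$ and $\delta$ entirely, handles all normal directions at once rather than privileging $e_{n+1}$, and makes the equality analysis transparent: the three estimates you relax correspond exactly to mixed total geodesy, $tr\,h_1 = tr\,h_2$, and $\overline{K}=\sup\overline{K}$ on mixed planes, which is precisely the stated characterization (your frame-extension remark matching the paper's closing observation). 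The paper's heavier machinery buys nothing extra for this particular theorem—it is inherited from Chen's $\delta$-invariant proofs—so your streamlined version establishes the same statement, equality case included.
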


\begin{proof}
Given a local orthonormal frame $\{ e_1, \cdots, e_n \}$ of $TM$ such that $e_1, \cdots, e_{n_1}\in \Gamma(\mathcal{D}_1)$ and $e_{n_1+1}, \cdots, e_{n}\in \Gamma(\mathcal{D}_2)$, we have
\begin{equation}\label{eq: sect5}
\frac{\triangle f}{f} = \sum_{i=1}^{n_1} K(e_i\wedge e_j)
\end{equation}
for each $j = n_1+1, \cdots, n$.

By the Gauss equation, we get
\begin{equation}\label{eq: scalar3}
2\tau = n^2 H^2 - || h ||^2 + \sum_{i,j=1}^n \overline{K}(e_i\wedge e_j),
\end{equation}
where $\overline{K}(e_i\wedge e_j) = \overline{g}(\overline{R}(e_i,e_j)e_j, e_i)$ and $\overline{R}$ is the Riemann curvature tensor of $\overline{M}$.

Let
\begin{equation}\label{eq: delta}
\delta := 2\tau - \sum_{i,j=1}^n \overline{K}(e_i\wedge e_j) - \frac{n^2}{2} H^2.
\end{equation}

Then we obtain
\begin{equation}\label{eq: scalar4}
n^2 H^2 = 2\delta + 2|| h ||^2.
\end{equation}
Given a local orthonormal frame $\{ e_{n+1}, \cdots, e_m \}$ of the normal bundle such that
$e_{n+1}$ is in the direction of the mean curvature vector field, from (\ref {eq: scalar4}) we have
\begin{equation}\label{eq: scalar5}
(\sum_{i=1}^n h_{ii}^{n+1})^2 = 2\left (\delta + \sum_{i=1}^n (h_{ii}^{n+1})^2 + \sum_{i\neq j} (h_{ij}^{n+1})^2
+ \sum_{r=n+2}^m \sum_{i,j=1}^n (h_{ij}^{r})^2 \right ).
\end{equation}
Let $\displaystyle{a_1 := \sum_{i=1}^{n_1} h_{ii}^{n+1}}$ and $\displaystyle{a_2 := \sum_{i=n_1+1}^n h_{ii}^{n+1}}$.

Using the following relation
\begin{equation}\label{eq: rel1}
a_1^2 + a_2^2 \geq \frac{1}{2}(a_1 + a_2)^2,
\end{equation}
by (\ref {eq: scalar5}) we obtain
\begin{eqnarray}
&  & \sum_{1\leq j< k\leq n_1} h_{jj}^{n+1} h_{kk}^{n+1} + \sum_{n_1+1\leq s< t\leq n} h_{ss}^{n+1} h_{tt}^{n+1}   \label{eq: rel2}   \\
&  & \geq \frac{1}{2}\delta + \sum_{1\leq i<j \leq n} (h_{ij}^{n+1})^2 + \frac{1}{2} \sum_{r=n+2}^m \sum_{i,j=1}^n (h_{ij}^{r})^2.  \nonumber
\end{eqnarray}
From (\ref {eq: sect5}) and the Gauss equation (\ref {eq: gausseq}), we get
\begin{eqnarray}
& &\frac{n_2\triangle f}{f} = \tau - \sum_{1\leq i< j\leq n_1} K(e_i\wedge e_j) - \sum_{n_1+1\leq s< t\leq n} K(e_s\wedge e_t)  \label{eq: scalar6} \\
& &= \tau - \sum_{1\leq i< j\leq n_1} \overline{K}(e_i\wedge e_j) - \sum_{r=n+1}^m \sum_{1\leq i< j\leq n_1} (h_{ii}^r h_{jj}^r - (h_{ij}^r)^2)  \nonumber \\
& &- \sum_{n_1+1\leq s< t\leq n} \overline{K}(e_s\wedge e_t) - \sum_{r=n+1}^m \sum_{n_1+1\leq s< t\leq n} (h_{ss}^r h_{tt}^r - (h_{st}^r)^2)    \nonumber   \\
& &= \tau - \frac{1}{2}\sum_{i,j=1}^n \overline{K}(e_i\wedge e_j) + \sum_{\begin{subarray}{c} 1\leq i\leq n_1 \\  n_1+1\leq s\leq n \end{subarray}}
     \overline{K}(e_i\wedge e_s)      \nonumber   \\
& &- \sum_{r=n+1}^m \sum_{1\leq i< j\leq n_1} (h_{ii}^r h_{jj}^r - (h_{ij}^r)^2) \nonumber \\
& &- \sum_{r=n+1}^m \sum_{n_1+1\leq s< t\leq n} (h_{ss}^r h_{tt}^r - (h_{st}^r)^2). \nonumber
\end{eqnarray}
By (\ref {eq: delta}), (\ref {eq: rel2}), and (\ref {eq: scalar6}), we obtain
\begin{eqnarray}
& &\frac{n_2\triangle f}{f} \leq \tau - \frac{1}{2}\sum_{i,j=1}^n \overline{K}(e_i\wedge e_j) +
      \sum_{\begin{subarray}{c} 1\leq i\leq n_1 \\  n_1+1\leq s\leq n \end{subarray}} \overline{K}(e_i\wedge e_s)  \label{eq: rel3} \\
& &-\frac{1}{2}\delta - \sum_{\begin{subarray}{c} 1\leq i\leq n_1 \\  n_1+1\leq s\leq n \end{subarray}} (h_{is}^{n+1})^2 -
      \frac{1}{2} \sum_{r=n+2}^m \sum_{i,j=1}^n (h_{ij}^r)^2    \nonumber \\
& &- \sum_{r=n+2}^m \sum_{1\leq i< j\leq n_1} (h_{ii}^r h_{jj}^r - (h_{ij}^r)^2) \nonumber \\
& &- \sum_{r=n+2}^m \sum_{n_1+1\leq s< t\leq n} (h_{ss}^r h_{tt}^r - (h_{st}^r)^2) \nonumber  \\
& &\leq \tau - \frac{1}{2}\sum_{i,j=1}^n \overline{K}(e_i\wedge e_j) + n_1 n_2 \sup \overline{K} - \frac{1}{2}\delta  \nonumber \\
& &- \sum_{r=n+1}^m \sum_{\begin{subarray}{c} 1\leq i\leq n_1 \\  n_1+1\leq s\leq n \end{subarray}} (h_{is}^{r})^2
    - \frac{1}{2} \sum_{r=n+2}^m (\sum_{1\leq i\leq n_1} h_{ii}^r)^2   \nonumber \\
& &- \frac{1}{2} \sum_{r=n+2}^m (\sum_{n_1+1\leq j\leq n} h_{jj}^r)^2  \nonumber \\
& &\leq \tau - \frac{1}{2}\sum_{i,j=1}^n \overline{K}(e_i\wedge e_j) + n_1 n_2 \sup \overline{K} - \frac{1}{2}\delta    \nonumber \\
& &= \frac{n^2}{4} H^2 + n_1 n_2 \sup \overline{K}.    \nonumber
\end{eqnarray}
Therefore, we have
$$
\frac{\triangle f}{f} \leq \frac{n^2}{4n_2} H^2 + n_1 \sup \overline{K}.
$$
Similar with Theorem 1.4 of \cite {C2}, from (\ref {eq: rel2}) and (\ref {eq: rel3}), we get that the equality sign of (\ref {eq: ineq1}) holds
if and only if the immersion $\phi$ is mixed totally geodesic such that $tr h_1 = tr h_2$ and $\overline{K}(X\wedge Y) = \sup \overline{K}$
for unit vectors $X\in \Gamma(\mathcal{D}_1)$ and $Y\in \Gamma(\mathcal{D}_2)$. Notice that we can choose $e_1$ and $e_{n_1+1}$ such that the plane spanned
by $e_1$ and $e_{n_1+1}$ is equal to the plane spanned by $X$ and $Y$ so that $\overline{K}(e_1\wedge e_{n_1+1}) = \overline{K}(X\wedge Y)$.
\end{proof}

\begin{remark}\label{rmk1}
1. If $(\overline{M}, \overline{g})$ is a Riemannian manifold of constant sectional curvature $c$, then (\ref {eq: ineq1}) becomes $(1.2)$ of
Theorem 1.4 of \cite {C2}.

2. Let $\overline{M}_{m_1,m_2} := S^{m_1}(\sqrt{\frac{m_1}{m}})\times S^{m_2}(\sqrt{\frac{m_2}{m}})\subset S^{m+1}(1)$ be the Clifford torus,
where $m = m_1 + m_2$, $m\geq 4$, and $2\leq m_1\leq m-2$ (\cite {CCK}, \cite {L}). As we know, $\overline{M}_{m_1,m_2}$ is a compact minimal hypersurface in
$S^{m+1}(1)$ and has only two distinct principal curvatures $\sqrt{\frac{m_2}{m_1}}$, $-\sqrt{\frac{m_1}{m_2}}$ with multiplicities $m_1$, $m_2$,
respectively. The squared norm of the second fundamental form of $\overline{M}_{m_1,m_2}$ in $S^{m+1}(1)$ is equal to $m$ so that by using the
Gauss equation, the scalar curvature of $\overline{M}_{m_1,m_2}$ is equal to $\frac{m(m-2)}{2}$ (\cite {E}, \cite {SY}).

Moreover, if we take two unit vectors $e_1,e_2\in T_p \overline{M}_{m_1,m_2}$, $p\in \overline{M}_{m_1,m_2}$ such that
$A_p e_1 = \sqrt{\frac{m_2}{m_1}} e_1$ and $A_p e_2 = -\sqrt{\frac{m_1}{m_2}} e_2$, then given $x,y\in \mathbb{R}$ with $x^2 + y^2 = 1$ we have
\begin{equation}\label{eq: rel033}
\text{Ric} (xe_1 + ye_2) = m-1 - (\frac{m_2}{m_1}x^2 + \frac{m_1}{m_2}y^2 + \frac{m^2}{m_1 m_2}x^2y^2),
\end{equation}
where $\text{Ric}$ denotes the Ricci curvature of $\overline{M}_{m_1,m_2}$.

With a simple computation, we easily obtain
\begin{equation}\label{eq: rel0033}
\text{Ric} \geq m-1 - (\frac{m_2}{m_1} + \frac{m_1}{m_2})
\end{equation}
with equality holding if and only if $(x, y) = (\pm \sqrt{\frac{m_2}{m}}, \pm \sqrt{\frac{m_1}{m}})$ (\cite {CI}, \cite {Z}).

In particular, we know that $\overline{M}_{m_1,m_2}$ is not a constant curvature space but has only three types of sectional curvatures
$\{ 0, \frac{m}{m_1}, \frac{m}{m_2} \}$ so that when we consider an isometric immersion $\phi : M\mapsto \overline{M}_{m_1,m_2}$,
Theorem \ref {thm1} will be useful \cite {C2}.
\end{remark}

To consider the next Theorem, we need to introduce the following Lemma. From \cite {C1}, we get

\begin{lemma}\label{lem1}
Let $a_1, \cdots, a_n, c$ be any real numbers with $n\geq 2$ such that
\begin{equation}\label{eq: rel33}
(\sum_{i=1}^n a_i)^2 = (n-1) (\sum_{i=1}^n a_i^2 + c).
\end{equation}
Then
\begin{equation}\label{eq: rel4}
2a_1 a_2 \geq c
\end{equation}
with equality holding if and only if $a_1 + a_2 = a_3 = \cdots = a_n$.
\end{lemma}

\begin{theorem}\label{thm2}
Let $(M = B\times_f F, g)$ be a warped product manifold of Riemannian manifolds $(B, g_B)$ and $(F, g_F)$ with the warping function $f$ and
$(\overline{M}, \overline{g})$ a Riemannian manifold. Let $\phi : (M, g)\mapsto (\overline{M}, \overline{g})$ be an isometric immersion. Then we have
\begin{equation}\label{eq: ineq2}
\frac{\triangle f}{f} \geq \frac{n_1 n^2}{2(n-1)} H^2 - \frac{n_1}{2} || h ||^2 + n_1 \inf \overline{K},
\end{equation}
where $n_1 = \dim B$ and $n_2 = \dim F$ with $n=n_1+n_2$.
\end{theorem}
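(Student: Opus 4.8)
The plan is to reduce (\ref{eq: ineq2}) to a single pointwise estimate for mixed sectional curvatures and then sum. Fix an index $j\in\{n_1+1,\dots,n\}$. By (\ref{eq: sect4}) one has $\frac{\triangle f}{f}=\sum_{i=1}^{n_1}K(e_i\wedge e_j)$, so it is enough to prove that for every $i\in\{1,\dots,n_1\}$
\[
K(e_i\wedge e_j)\ \geq\ \frac{n^2}{2(n-1)}H^2-\frac12\|h\|^2+\inf\overline{K};
\]
summing the $n_1$ copies of this inequality (the left-hand sum being independent of $j$) reproduces the three terms of (\ref{eq: ineq2}) with the common factor $n_1$.

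To obtain the pointwise bound I would start from the Gauss equation (\ref{eq: gausseq}), which gives
\[
K(e_i\wedge e_j)=\overline{K}(e_i\wedge e_j)+\sum_{r=n+1}^{m}\bigl(h_{ii}^rh_{jj}^r-(h_{ij}^r)^2\bigr),
\]
and estimate $\overline{K}(e_i\wedge e_j)\geq\inf\overline{K}$ by (\ref{eq: inf}). The decisive contribution is the mean-curvature direction $r=n+1$. Since $e_{n+1}$ is chosen along $\overrightarrow{H}$, we have $\sum_{k=1}^{n}h_{kk}^{n+1}=nH$; setting $a_k:=h_{kk}^{n+1}$ and $c:=\frac{n^2}{n-1}H^2-\sum_{k=1}^{n}a_k^2$ makes hypothesis (\ref{eq: rel33}) of Lemma \ref{lem1} hold. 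Applying Lemma \ref{lem1} to the pair with indices $i$ and $j$ (the statement is symmetric in the $a_k$, so any two may play the roles of $a_1,a_2$) yields $2h_{ii}^{n+1}h_{jj}^{n+1}\geq c$, that is,
\[
h_{ii}^{n+1}h_{jj}^{n+1}\ \geq\ \frac{n^2}{2(n-1)}H^2-\frac12\sum_{k=1}^{n}(h_{kk}^{n+1})^2.
\]

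It then remains to absorb all leftover second fundamental form terms into $-\frac12\|h\|^2$. For the directions $r\geq n+2$ I would use the elementary bound $h_{ii}^rh_{jj}^r\geq-\frac12\bigl((h_{ii}^r)^2+(h_{jj}^r)^2\bigr)$, and then compare the resulting collection of error terms — namely $\frac12\sum_k(h_{kk}^{n+1})^2$, the diagonal squares $(h_{ii}^r)^2$ and $(h_{jj}^r)^2$ for $r\geq n+2$, and every off-diagonal square $(h_{ij}^r)^2$ for $r\geq n+1$ — term by term against $\frac12\|h\|^2=\frac12\sum_{r=n+1}^{m}\sum_{k,l=1}^{n}(h_{kl}^r)^2$. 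The point is that, because $i\neq j$ (indeed $i\leq n_1<j$), each off-diagonal square $(h_{ij}^r)^2$ occurs twice in $\|h\|^2$, as $(h_{ij}^r)^2$ and $(h_{ji}^r)^2$, while each diagonal square occurs once; this exactly allows all error terms to be fitted disjointly under the budget $\frac12\|h\|^2$. Combining these estimates gives the desired pointwise bound.

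I expect this last accounting to be the only delicate step, since the cross products $h_{ii}^rh_{jj}^r$ for the non-minimal normal directions $r\geq n+2$ have indefinite sign and must be bounded below without overspending the diagonal part of $\|h\|^2$. Note that, unlike Theorem \ref{thm1}, no supremum-curvature hypothesis is used and the argument requires only $\inf\overline{K}$; once the term-by-term comparison is verified, summing over $i=1,\dots,n_1$ completes the proof of (\ref{eq: ineq2}).
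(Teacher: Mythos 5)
Your proof is correct and follows essentially the same route as the paper's: both reduce the theorem to the pointwise bound $K(e_i\wedge e_j)\geq \frac{n^2}{2(n-1)}H^2-\frac12\|h\|^2+\inf\overline{K}$ by applying Lemma \ref{lem1} to the diagonal entries $h_{kk}^{n+1}$ in the mean-curvature direction, absorbing the remaining second-fundamental-form terms, and then summing over $i=1,\dots,n_1$ via (\ref{eq: sect4}). The only difference is presentational: the paper routes the bookkeeping through the scalar curvature $\tau$ and the auxiliary quantity $\delta=\frac{n^2}{n-1}H^2-\|h\|^2$, while you feed the lemma the constant $c=\frac{n^2}{n-1}H^2-\sum_k(h_{kk}^{n+1})^2$ directly -- but these choices coincide, since $\delta+\sum_{i\neq j}(h_{ij}^{n+1})^2+\sum_{r\geq n+2}\sum_{i,j}(h_{ij}^{r})^2$ equals your $c$, so the two arguments produce the identical inequality.
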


\begin{proof}
We choose a local orthonormal frame $\{ e_1, \cdots, e_n \}$ of $TM$ such that $e_1, \cdots, e_{n_1}\in$
$\Gamma(\mathcal{D}_1)$ and $e_{n_1+1}, \cdots, e_{n}\in \Gamma(\mathcal{D}_2)$ and a local orthonormal frame $\{ e_{n+1}, \cdots, e_m \}$ of the normal bundle such that
$e_{n+1}$ is in the direction of the mean curvature vector field.

Using the Gauss equation, we get
\begin{equation}\label{eq: scalar33}
2\tau = n^2 H^2 - || h ||^2 + \sum_{i,j=1}^n \overline{K}(e_i\wedge e_j).
\end{equation}
Let
\begin{equation}\label{eq: scalar7}
\delta := 2\tau - \frac{n^2(n-2)}{n-1} H^2 - \sum_{i,j=1}^n \overline{K}(e_i\wedge e_j).
\end{equation}
From (\ref {eq: scalar33}) and (\ref {eq: scalar7}), we obtain
\begin{equation}\label{eq: scalar8}
n^2 H^2 = (n-1) || h ||^2 + (n-1) \delta
\end{equation}
so that
\begin{equation}\label{eq: scalar9}
(\sum_{i=1}^n h_{ii}^{n+1})^2 = (n-1)\left (\sum_{i=1}^n (h_{ii}^{n+1})^2 + \sum_{i\neq j} (h_{ij}^{n+1})^2
+ \sum_{r=n+2}^m \sum_{i,j=1}^n (h_{ij}^{r})^2 + \delta \right ).
\end{equation}
Applying Lemma \ref {lem1} to (\ref {eq: scalar9}) with $a_1 = h_{11}^{n+1}$ and $a_2 = h_{n_1+1n_1+1}^{n+1}$, we have
\begin{equation}\label{eq: rel8}
2h_{11}^{n+1} h_{n_1+1n_1+1}^{n+1} \geq \sum_{i\neq j} (h_{ij}^{n+1})^2
+ \sum_{r=n+2}^m \sum_{i,j=1}^n (h_{ij}^{r})^2 + \delta
\end{equation}
so that
\begin{eqnarray}
& &K(e_1\wedge e_{n_1+1}) \geq \sum_{r=n+1}^m \sum_{j\in S_{1n_1+1}} \{ (h_{1j}^r)^2 + (h_{n_1+1j}^r)^2 \}  \label{eq: rel9}  \\
& &+ \frac{1}{2} \sum_{\begin{subarray}{c} i,j\in S_{1n_1+1} \\ i\neq j \end{subarray}} (h_{ij}^{n+1})^2
   + \frac{1}{2} \sum_{r=n+2}^m \sum_{i,j\in S_{1n_1+1}} (h_{ij}^{r})^2  \nonumber   \\
& &+ \frac{1}{2} \sum_{r=n+2}^m (h_{11}^r + h_{n_1+1n_1+1}^r)^2 + \frac{\delta}{2} + \inf \overline{K}  \nonumber   \\
& &\geq \frac{\delta}{2} + \inf \overline{K},  \nonumber
\end{eqnarray}
where $S_{1n_1+1} = \{ 1, \cdots, n \} - \{ 1, n_1+1 \}$.

Similarly, we obtain
\begin{equation}\label{eq: rel10}
K(e_i\wedge e_{n_1+1}) \geq \frac{\delta}{2} + \inf \overline{K}
\end{equation}
for $1\leq i\leq n_1$.

Since $\displaystyle{ K(e_i\wedge e_{n_1+1}) = \frac{1}{f} ((\nabla_{e_i} e_i)f - e_i^2 f) }$ for $1\leq i\leq n_1$, by (\ref {eq: lapl2}),
(\ref {eq: scalar8}), and (\ref {eq: rel10}), we get
\begin{eqnarray}
\frac{\triangle f}{f}
& & \geq \frac{n_1}{2} \delta + n_1 \inf \overline{K}    \label{eq: rel11}  \\
& &\geq \frac{n_1 n^2}{2(n-1)} H^2 - \frac{n_1}{2} || h ||^2 + n_1 \inf \overline{K}.  \nonumber
\end{eqnarray}
Therefore, we have the result.
\end{proof}

\begin{remark}
1. In a similar way with Theorem 1 of \cite {C3}, we can also give a condition for the equality sign of (\ref {eq: ineq2}) to be held,
which is just the same with the condition of Theorem 1 of \cite {C3} except the additional condition $\overline{K}(X\wedge Y) = \inf \overline{K}$
for $X\in \Gamma(\mathcal{D}_1)$ and $Y\in \Gamma(\mathcal{D}_2)$.

2. From Theorem \ref {thm1} and Theorem \ref {thm2}, we obtain both the upper bound and the lower bound of the function $\frac{\triangle f}{f}$
as follows:
\begin{equation}\label{eq: rel11}
\frac{n_1 n^2}{2(n-1)} H^2 - \frac{n_1}{2} || h ||^2 + n_1 \inf \overline{K} \leq \frac{\triangle f}{f} \leq \frac{n^2}{4n_2} H^2 + n_1 \sup \overline{K}.
\end{equation}
\end{remark}

\section{Applications}\label{app}

Let $M(c_1,c_2) := M(c_1) \times M(c_2)$ be the product manifold of Riemannian manifolds $M(c_1)$ and $M(c_2)$,
where $M(c_i)$ is a constant curvature space of constant sectional curvature $c_i$ for $i = 1,2$.

Then we know that $M(c_1,c_2)$ has only three types of sectional curvatures $\{ c_1,c_2,0 \}$.

Let $c := \min \{ c_1,c_2,0 \}$ and $\bar{c} := \max \{ c_1,c_2,0 \}$. 

Using Theorem \ref {thm1}, we easily get

\begin{corollary}\label{cor111}
Let $(B\times_f F, g)$ be a warped product manifold of Riemannian manifolds $(B, g_B)$ and $(F, g_F)$ with the warping function $f$
such that  $n_1 = \dim B$, $n_2 = \dim F$, and $n=n_1+n_2$.

Let $\phi$ be an isometric immersion from the warped product manifold $(B\times_f F, g)$
to the product manifold $M(c_1,c_2)$.

Then we have
\begin{equation}\label{eq: rel12}
\frac{\triangle f}{f} \leq \frac{n^2}{4n_2} H^2 + n_1 \bar{c}.
\end{equation}
\end{corollary}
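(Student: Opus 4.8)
The plan is to deduce the Corollary directly from Theorem \ref{thm1}, whose conclusion reads
$$\frac{\triangle f}{f} \leq \frac{n^2}{4n_2} H^2 + n_1 \sup \overline{K}$$
for an arbitrary Riemannian target $(\overline{M}, \overline{g})$. Thus everything reduces to identifying $\sup \overline{K}$ when $\overline{M} = M(c_1,c_2) = M(c_1)\times M(c_2)$ and showing it equals $\bar{c} = \max\{c_1,c_2,0\}$; once this is in hand, substitution yields (\ref{eq: rel12}) immediately.

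First I would compute the sectional curvature of an arbitrary $2$-plane $\pi \subset T_p M(c_1,c_2)$. Writing the curvature tensor of the product as the direct sum of the curvature tensors of the two factors, and decomposing an orthonormal basis $\{u,v\}$ of $\pi$ as $u = u_1 + u_2$, $v = v_1 + v_2$ according to the splitting $T_p M(c_1,c_2) = TM(c_1)\oplus TM(c_2)$, I would arrive at
$$\overline{K}(\pi) = \frac{c_1\,|u_1\wedge v_1|^2 + c_2\,|u_2\wedge v_2|^2}{|u\wedge v|^2},$$
using that each factor has constant curvature and that a plane with one leg in each factor contributes zero. Here $|u_1\wedge v_1|^2 = |u_1|^2|v_1|^2 - \langle u_1, v_1\rangle^2$, and similarly for the other terms.

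The key elementary step is the inequality $|u\wedge v|^2 \geq |u_1\wedge v_1|^2 + |u_2\wedge v_2|^2$, which follows by expanding $|u\wedge v|^2 = |u|^2|v|^2 - \langle u,v\rangle^2$ in the product metric and dominating the cross term via Cauchy--Schwarz together with the arithmetic--geometric mean inequality. Consequently the two coefficients $|u_1\wedge v_1|^2/|u\wedge v|^2$ and $|u_2\wedge v_2|^2/|u\wedge v|^2$ are nonnegative with sum at most $1$, so $\overline{K}(\pi)$ is a combination of $c_1$, $c_2$, and $0$ with nonnegative weights summing to at most one; in particular $\overline{K}(\pi) \leq \max\{c_1,c_2,0\} = \bar{c}$. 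This is consistent with the fact, recorded at the start of this section, that the sectional curvatures of $M(c_1,c_2)$ are drawn from the values $\{c_1,c_2,0\}$.

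Finally I would verify that the bound $\bar{c}$ is actually attained, so that $\sup\overline{K} = \bar{c}$ and not merely $\leq \bar{c}$: if $\bar{c} = c_i$ for some $i$, take $\pi$ lying entirely inside the factor $M(c_i)$, giving $\overline{K}(\pi) = c_i = \bar{c}$; if instead $\bar{c} = 0$ (that is, $c_1, c_2 < 0$), take a mixed plane with one leg in each factor, giving $\overline{K}(\pi) = 0 = \bar{c}$. Substituting $\sup\overline{K} = \bar{c}$ into the inequality of Theorem \ref{thm1} then gives (\ref{eq: rel12}). I expect no genuine obstacle in this argument: the entire content is the sectional-curvature computation for a Riemannian product, and the only point requiring a moment's care is the cross-term inequality that keeps the weights from exceeding $1$.
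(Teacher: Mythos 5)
Your proposal is correct, and at the top level it follows the same strategy as the paper: apply Theorem \ref{thm1} and substitute $\sup\overline{K} = \bar{c}$. The difference lies in how that supremum is identified, and here your treatment is genuinely more careful than the paper's. The paper's entire justification is the preceding assertion that $M(c_1,c_2)$ ``has only three types of sectional curvatures $\{c_1,c_2,0\}$,'' which is literally true only for adapted planes (both legs in one factor, or one leg in each factor); for a general plane spanned by orthonormal $u = u_1+u_2$, $v = v_1+v_2$ the curvature is $c_1|u_1\wedge v_1|^2 + c_2|u_2\wedge v_2|^2$, which can take intermediate values --- e.g.\ in $S^2(1)\times S^2(1)$ the plane spanned by $\frac{1}{\sqrt{2}}(u_1+u_2)$, $\frac{1}{\sqrt{2}}(v_1+v_2)$ has curvature $\tfrac12$, not in $\{0,1\}$. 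Your computation handles arbitrary planes: the cross-term estimate $|u_1|^2|v_2|^2 + |u_2|^2|v_1|^2 \geq 2\langle u_1,v_1\rangle\langle u_2,v_2\rangle$ (Cauchy--Schwarz plus AM--GM) gives $|u\wedge v|^2 \geq |u_1\wedge v_1|^2 + |u_2\wedge v_2|^2$, so $\overline{K}(\pi)$ is a nonnegative combination of $c_1,c_2$ with weights summing to at most $1$, hence $\overline{K}(\pi)\leq\bar{c}$ (using $\bar c \geq 0$), and your attainment argument shows the supremum is exactly $\bar c$. So you prove what the paper only asserts, and in doing so you repair a small but real imprecision in its statement about the curvature values of a Riemannian product; the conclusion and the final substitution into Theorem \ref{thm1} are identical in both arguments.
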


\begin{remark}\label{rmk2}
Let $(B\times_f F, g)$ be a warped product manifold of Riemannian manifolds $(B, g_B)$ and $(F, g_F)$ with the warping function $f$
such that  $n_1 = \dim B$, $n_2 = \dim F$, and $n=n_1+n_2$.

Let $\phi$ be an isometric minimal immersion from the warped product manifold $(B\times_f F, g)$
to the product manifold $M(c_1,c_2)$.

Then we obtain
\begin{equation}\label{eq: rel122}
\frac{\triangle f}{f} \leq  n_1 \bar{c}.
\end{equation}
\end{remark}

By Remark \ref {rmk2}, we get

\begin{theorem}
Let $(M=B\times_f F, g)$ be a warped product manifold of Riemannian manifolds $(B, g_B)$ and $(F, g_F)$ with the warping function $f$
such that  $n_1 = \dim B$, $n_2 = \dim F$, and $n=n_1+n_2$ and let $M(c_1,c_2)$ be the product manifold of constant curvature spaces  
$M(c_1)$ and $M(c_2)$. 

Then there does not exist an isometric minimal immersion $\phi$ from the warped product manifold $(M, g)$
to the product manifold $M(c_1,c_2)$ such that $\frac{\triangle f}{f}(\pi_1(p)) > n_1 \bar{c}$ for some $p\in M$.
\end{theorem}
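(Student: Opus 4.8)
The plan is to prove the statement by contradiction, reducing it at once to the pointwise estimate already recorded in Remark \ref{rmk2}. I would begin by assuming the negation: that there exists an isometric minimal immersion $\phi : (M, g) \mapsto M(c_1, c_2)$ together with a point $p \in M$ satisfying $\frac{\triangle f}{f}(\pi_1(p)) > n_1 \bar{c}$.

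The first substantive step is to identify the relevant curvature bound of the target. Since $M(c_1, c_2) = M(c_1) \times M(c_2)$ carries only the three sectional curvatures $\{ c_1, c_2, 0 \}$, we have $\sup \overline{K} = \max \{ c_1, c_2, 0 \} = \bar{c}$ at every point. Substituting this into Theorem \ref{thm1} applied to $\phi$ yields the pointwise inequality
$$
\frac{\triangle f}{f} \leq \frac{n^2}{4n_2} H^2 + n_1 \bar{c},
$$
which is exactly Corollary \ref{cor111}. Because $\phi$ is minimal, its mean curvature vector field vanishes identically, so $H^2 \equiv 0$ on $M$ and the bound collapses to $\frac{\triangle f}{f} \leq n_1 \bar{c}$ everywhere, which is precisely the assertion of Remark \ref{rmk2}.

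To finish, I would evaluate this last inequality at the point $\pi_1(p) \in B$, obtaining $\frac{\triangle f}{f}(\pi_1(p)) \leq n_1 \bar{c}$, which stands in direct contradiction with the standing assumption $\frac{\triangle f}{f}(\pi_1(p)) > n_1 \bar{c}$. This contradiction shows that no such immersion $\phi$ can exist.

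There is essentially no analytic obstacle here, since all of the real work is already discharged in Theorem \ref{thm1}; the argument in this theorem is pure bookkeeping. The only point requiring a moment of care is that the bounds in Theorem \ref{thm1}, Corollary \ref{cor111}, and Remark \ref{rmk2} are genuinely pointwise, being derived from the Gauss equation evaluated at an arbitrary point of $M$. Consequently the quantity $\frac{\triangle f}{f}$, which descends to a function on $B$ through $\pi_1$, is legitimately controlled at the single fibre over $\pi_1(p)$, so that the contradiction is valid at that one point and the conclusion follows.
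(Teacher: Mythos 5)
Your proposal is correct and follows essentially the same route as the paper, which derives the theorem from Theorem \ref{thm1} via Corollary \ref{cor111} and Remark \ref{rmk2} (minimality giving $H^2 \equiv 0$); you merely phrase the final step as a contradiction, and your only loose point is immaterial: what enters Theorem \ref{thm1} is the supremum of $\overline{K}$ over planes tangent to the immersed submanifold, which satisfies $\sup\overline{K} \leq \bar{c}$ rather than equality, but the inequality still yields the bound.
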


Using Theorem \ref {thm2}, we have

\begin{corollary}\label{cor112}
Let $(B\times_f F, g)$ be a warped product manifold of Riemannian manifolds $(B, g_B)$ and $(F, g_F)$ with the warping function $f$
such that  $n_1 = \dim B$, $n_2 = \dim F$, and $n=n_1+n_2$.

Let $\phi$ be an isometric immersion from the warped product manifold $(B\times_f F, g)$
to the product manifold $M(c_1,c_2)$.

Then we obtain
\begin{equation}\label{eq: rel13}
\frac{\triangle f}{f} \geq \frac{n_1 n^2}{2(n-1)} H^2 - \frac{n_1}{2} || h ||^2 + n_1 c.
\end{equation}
\end{corollary}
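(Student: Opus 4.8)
The plan is to obtain this purely as a specialization of Theorem \ref{thm2}, since all of the analytic work relating $\frac{\triangle f}{f}$ to $H^2$, $\|h\|^2$ and the ambient sectional curvatures has already been carried out there. First I would apply Theorem \ref{thm2} verbatim to the given isometric immersion $\phi : (B\times_f F, g) \mapsto M(c_1,c_2)$, taking $\overline{M} = M(c_1,c_2)$. This immediately yields
\[
\frac{\triangle f}{f} \geq \frac{n_1 n^2}{2(n-1)} H^2 - \frac{n_1}{2} || h ||^2 + n_1 \inf \overline{K},
\]
so that the entire problem reduces to evaluating the pointwise quantity $\inf \overline{K}$ for this particular target manifold.

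The key step is then to identify $\inf \overline{K} = c$. Since $M(c_1,c_2) = M(c_1)\times M(c_2)$ is a Riemannian product of constant curvature spaces, its sectional curvatures take the values recalled above, namely $\{c_1, c_2, 0\}$, and hence the infimum over all $2$-planes $\pi \subset T_p N$ equals $\min\{c_1, c_2, 0\}$, which is precisely the constant $c$ defined before the statement. Substituting $\inf \overline{K} = c$ into the displayed inequality gives
\[
\frac{\triangle f}{f} \geq \frac{n_1 n^2}{2(n-1)} H^2 - \frac{n_1}{2} || h ||^2 + n_1 c,
\]
which is exactly \eqref{eq: rel13}, and the proof is complete.

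I do not expect any serious obstacle, as the substance of the argument is entirely contained in Theorem \ref{thm2}; the only point demanding a little care is the evaluation of $\inf \overline{K}$. Here one should note that the relevant infimum is taken over \emph{all} tangent $2$-planes at a point (as in \eqref{eq: inf}), and that the curvature tensor of a product splits as $\overline{R} = R_{M(c_1)} \oplus R_{M(c_2)}$, so that the sectional curvature of a mixed plane spanned by one vector from each factor is $0$. It therefore suffices to observe that no plane can have sectional curvature below $\min\{c_1,c_2,0\}$ while the value $c$ itself is attained, which confirms $\inf \overline{K} = c$ and legitimizes the final substitution.
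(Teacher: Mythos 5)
Your proposal is correct and takes essentially the same route as the paper, which likewise obtains Corollary \ref{cor112} simply by applying Theorem \ref{thm2} with $\overline{M}=M(c_1,c_2)$ and using that the sectional curvatures of the product lie in $\{c_1,c_2,0\}$, so that $\inf\overline{K}\geq c=\min\{c_1,c_2,0\}$. Note that only this lower bound (over planes tangent to the immersed submanifold) is actually needed, not attainment of $c$, since replacing $\inf\overline{K}$ by the smaller quantity $c$ preserves the direction of the inequality; your argument supplies exactly that.
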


From Remark \ref {rmk1}, we know that the Clifford torus $\overline{M}_{m_1,m_2}$ is a product manifold of spheres $S^{m_1}(\sqrt{\frac{m_1}{m}})$ and
$S^{m_2}(\sqrt{\frac{m_2}{m}})$. i.e.,  $\overline{M}_{m_1,m_2}$ is a product manifold of constant curvature spaces 
$S^{m_1}(\sqrt{\frac{m_1}{m}})$ and $S^{m_2}(\sqrt{\frac{m_2}{m}})$, where $S^{m_1}(\sqrt{\frac{m_1}{m}})$ and $S^{m_2}(\sqrt{\frac{m_2}{m}})$ are 
constant curvature spaces of constant sectional curvatures $\frac{m}{m_1}$ and $\frac{m}{m_2}$, respectively. 

Thus, by using Corollary \ref{cor111}, we immediately obtain

\begin{corollary}\label{cor113}
Let $(B\times_f F, g)$ be a warped product manifold of Riemannian manifolds $(B, g_B)$ and $(F, g_F)$ with the warping function $f$
such that  $n_1 = \dim B$, $n_2 = \dim F$, and $n=n_1+n_2$.

Let $\phi$ be an isometric immersion from the warped product manifold $(B\times_f F, g)$
to the Clifford torus $\overline{M}_{m_1,m_2}$ with $2\leq m_1 \leq \frac{m}{2}$, $m = m_1 + m_2$, and $m\geq 4$.

Then we have
\begin{equation}\label{eq: rel12}
\frac{\triangle f}{f} \leq \frac{n^2}{4n_2} H^2 + \frac{n_1 m}{m_1}.
\end{equation}
\end{corollary}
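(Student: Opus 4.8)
The plan is to reduce the statement entirely to Corollary \ref{cor111} by recognizing the Clifford torus as an instance of the product target $M(c_1,c_2)$ studied in this section. From part 2 of Remark \ref{rmk1} we already know that $\overline{M}_{m_1,m_2} = S^{m_1}(\sqrt{m_1/m}) \times S^{m_2}(\sqrt{m_2/m})$ is precisely a product of two constant curvature spaces, whose factors carry constant sectional curvatures $c_1 = m/m_1$ and $c_2 = m/m_2$. Thus in the notation of Section \ref{app} we may write $\overline{M}_{m_1,m_2} = M(c_1,c_2)$, and Corollary \ref{cor111} applies without modification to any isometric immersion $\phi : (B\times_f F, g) \to \overline{M}_{m_1,m_2}$.

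First I would invoke Corollary \ref{cor111} directly to obtain
$$
\frac{\triangle f}{f} \leq \frac{n^2}{4n_2} H^2 + n_1 \bar{c},
$$
where $\bar{c} = \max\{c_1, c_2, 0\} = \max\{m/m_1,\, m/m_2,\, 0\}$. Next I would evaluate $\bar{c}$ using the hypothesis $2 \leq m_1 \leq m/2$. Since $m_1 \leq m/2$, the complementary dimension satisfies $m_2 = m - m_1 \geq m/2 \geq m_1$, so that $m_1 \leq m_2$ and hence $m/m_1 \geq m/m_2 > 0$. Therefore the maximum is attained by the first curvature, giving $\bar{c} = m/m_1$. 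Substituting this value into the displayed inequality yields exactly the claimed bound
$$
\frac{\triangle f}{f} \leq \frac{n^2}{4n_2} H^2 + \frac{n_1 m}{m_1}.
$$

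The argument is a straightforward specialization of the general product-target corollary, so there is no substantive obstacle. The one step that genuinely requires the stated hypothesis is the simplification of $\bar{c}$: it is precisely the constraint $m_1 \leq m/2$ that forces $m/m_1$ to dominate $m/m_2$ (and trivially $0$), allowing $\bar{c}$ to be replaced by $m/m_1$. Without this dimensional ordering the bound could only be expressed in terms of $\max\{m/m_1,\, m/m_2\}$, so verifying the inequality $m_1 \leq m_2$ from the hypothesis is the essential, if elementary, point of the proof.
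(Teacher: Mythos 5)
Your proof is correct and takes essentially the same route as the paper: the paper likewise treats the Clifford torus $\overline{M}_{m_1,m_2}$ as the product $M(c_1,c_2)$ of constant curvature spaces with $c_1 = m/m_1$ and $c_2 = m/m_2$ and then invokes Corollary \ref{cor111}. Your explicit check that the hypothesis $m_1 \leq \frac{m}{2}$ forces $\bar{c} = \max\{m/m_1,\, m/m_2,\, 0\} = m/m_1$ is exactly the (implicit) final step of the paper's argument.
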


\begin{remark}\label{rmk3}
Let $(B\times_f F, g)$ be a warped product manifold of Riemannian manifolds $(B, g_B)$ and $(F, g_F)$ with the warping function $f$
such that  $n_1 = \dim B$, $n_2 = \dim F$, and $n=n_1+n_2$.

Let $\phi$ be an isometric minimal immersion from the warped product manifold $(B\times_f F, g)$
to the Clifford torus $\overline{M}_{m_1,m_2}$ with $2\leq m_1 \leq \frac{m}{2}$, $m = m_1 + m_2$, and $m\geq 4$.

Then we obtain
\begin{equation}\label{eq: rel122}
\frac{\triangle f}{f} \leq  \frac{n_1 m}{m_1}.
\end{equation}
\end{remark}

By Remark \ref {rmk3}, we get

\begin{theorem}
Let $(M=B\times_f F, g)$ be a warped product manifold of Riemannian manifolds $(B, g_B)$ and $(F, g_F)$ with the warping function $f$
such that  $n_1 = \dim B$, $n_2 = \dim F$, and $n=n_1+n_2$ and let $\overline{M}_{m_1,m_2}$ be the Clifford torus $S^{m_1}(\sqrt{\frac{m_1}{m}})\times S^{m_2}(\sqrt{\frac{m_2}{m}})$ such that $2\leq m_1 \leq \frac{m}{2}$, $m = m_1 + m_2$, and $m\geq 4$.

Then there does not exist an isometric minimal immersion $\phi$ from the warped product manifold $(M, g)$
to the Clifford torus $\overline{M}_{m_1,m_2}$ such that $\frac{\triangle f}{f}(\pi_1(p)) > \frac{n_1 m}{m_1}$ for some $p\in M$.
\end{theorem}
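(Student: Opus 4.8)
The plan is to argue by contradiction, reducing the non-existence statement to the pointwise estimate already recorded in Remark \ref{rmk3}. Suppose, contrary to the claim, that there exists an isometric minimal immersion $\phi : (M, g) \mapsto \overline{M}_{m_1,m_2}$ together with a point $p \in M$ for which $\frac{\triangle f}{f}(\pi_1(p)) > \frac{n_1 m}{m_1}$. Since the hypotheses $2 \leq m_1 \leq \frac{m}{2}$, $m = m_1 + m_2$, and $m \geq 4$ are exactly those under which Remark \ref{rmk3} applies, I would invoke that remark to conclude that the inequality $\frac{\triangle f}{f} \leq \frac{n_1 m}{m_1}$ holds for this immersion.

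The key point to make explicit is that this bound is genuinely pointwise. It is obtained from Corollary \ref{cor113} by specializing to a minimal immersion, i.e.\ by setting $H = 0$ in $\frac{\triangle f}{f} \leq \frac{n^2}{4n_2} H^2 + \frac{n_1 m}{m_1}$; Corollary \ref{cor113} in turn comes from Corollary \ref{cor111} applied to the Clifford torus, where the target has sectional curvatures in $\{ 0, \frac{m}{m_1}, \frac{m}{m_2} \}$ and the constraint $m_1 \leq \frac{m}{2}$ forces $\bar{c} = \max\{ \frac{m}{m_1}, \frac{m}{m_2}, 0 \} = \frac{m}{m_1}$. Every inequality in the chain leading to Theorem \ref{thm1} is derived at a fixed point $p$ from a pointwise Gauss-equation identity, so the resulting estimate holds at each point of $M$; moreover $\frac{\triangle f}{f}$ is a function on $B$ alone (see (\ref{eq: lapl2})), so its value at $p$ depends only on $\pi_1(p)$, and evaluating the bound at $\pi_1(p)$ is legitimate.

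Combining the two steps, I would evaluate $\frac{\triangle f}{f} \leq \frac{n_1 m}{m_1}$ at $\pi_1(p)$ to obtain $\frac{\triangle f}{f}(\pi_1(p)) \leq \frac{n_1 m}{m_1}$, in direct contradiction with the assumed strict inequality. Hence no such minimal immersion can exist. There is essentially no obstacle here beyond bookkeeping: all the analytic content is already contained in Theorem \ref{thm1} and its specializations, and the only point requiring a word of care is the observation that the estimate of Remark \ref{rmk3} is valid at every point, so that a single violating point $p$ suffices to derive the contradiction.
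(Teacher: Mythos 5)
Your proposal is correct and is essentially the paper's own argument: the theorem is simply the contrapositive of Remark \ref{rmk3}, which the paper invokes directly, and which itself comes from setting $H=0$ in Corollary \ref{cor113} (with $\bar{c}=\frac{m}{m_1}$ since $m_1\leq\frac{m}{2}$). Your additional remarks on the pointwise validity of the bound and the dependence of $\frac{\triangle f}{f}$ only on $\pi_1(p)$ are sound bookkeeping that the paper leaves implicit.
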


Using Corollary \ref {cor112}, we have

\begin{corollary}
Let $(B\times_f F, g)$ be a warped product manifold of Riemannian manifolds $(B, g_B)$ and $(F, g_F)$ with the warping function $f$
such that  $n_1 = \dim B$, $n_2 = \dim F$, and $n=n_1+n_2$.

Let $\phi$ be an isometric immersion from the warped product manifold $(B\times_f F, g)$
to the Clifford torus $\overline{M}_{m_1,m_2}$ with $2\leq m_1 \leq \frac{m}{2}$, $m = m_1 + m_2$, and $m\geq 4$.

Then we obtain
\begin{equation}\label{eq: rel13}
\frac{\triangle f}{f} \geq \frac{n_1 n^2}{2(n-1)} H^2 - \frac{n_1}{2} || h ||^2.
\end{equation}
\end{corollary}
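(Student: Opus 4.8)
The plan is to obtain this as an immediate specialization of Corollary~\ref{cor112}, which already provides the lower bound for an isometric immersion into an arbitrary target product $M(c_1,c_2)$ of constant-curvature spaces. The entire content of the argument is to identify the relevant curvature constant $c=\min\{c_1,c_2,0\}$ for the specific target $\overline{M}_{m_1,m_2}$ and to verify that it vanishes.

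First I would recall, exactly as recorded in the passage preceding this corollary, that the Clifford torus $\overline{M}_{m_1,m_2}=S^{m_1}(\sqrt{m_1/m})\times S^{m_2}(\sqrt{m_2/m})$ is precisely the product manifold $M(c_1,c_2)$ of two constant-curvature spaces. Since a round sphere $S^{k}(r)$ of radius $r$ carries constant sectional curvature $1/r^{2}$, the two factors have sectional curvatures
\begin{equation*}
c_1=\frac{m}{m_1}\quad\text{and}\quad c_2=\frac{m}{m_2}.
\end{equation*}
Thus an isometric immersion $\phi$ into $\overline{M}_{m_1,m_2}$ is exactly an isometric immersion into $M(c_1,c_2)$ for these particular values of $c_1,c_2$, and Corollary~\ref{cor112} applies verbatim.

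Next I would observe that under the standing hypotheses $2\le m_1\le \tfrac{m}{2}$ and $m\ge 4$ (so that $m_2=m-m_1\ge 2$ as well), both factor curvatures are strictly positive: $c_1=\tfrac{m}{m_1}>0$ and $c_2=\tfrac{m}{m_2}>0$. Consequently the constant appearing in Corollary~\ref{cor112} is
\begin{equation*}
c:=\min\{c_1,c_2,0\}=0.
\end{equation*}
Substituting this into the inequality furnished by Corollary~\ref{cor112} makes the term $n_1 c$ vanish, leaving exactly
\begin{equation*}
\frac{\triangle f}{f}\geq \frac{n_1 n^2}{2(n-1)}H^2-\frac{n_1}{2}\,|| h ||^2,
\end{equation*}
which is the asserted bound.

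There is essentially no obstacle to overcome: the statement is a direct specialization of Corollary~\ref{cor112}, and the only substantive step is the elementary curvature computation showing that both spherical factors of the Clifford torus have nonnegative (indeed positive) sectional curvature, so that $c=0$ and the curvature term drops out. The slight subtlety worth flagging is merely the sign bookkeeping in $c=\min\{c_1,c_2,0\}$: it is precisely because $0$ is included in this minimum, together with the positivity of $c_1$ and $c_2$, that one obtains $c=0$ rather than a positive contribution to the lower bound.
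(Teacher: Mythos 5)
Your proposal is correct and matches the paper's argument exactly: the paper likewise identifies the Clifford torus $\overline{M}_{m_1,m_2}$ as the product $M(c_1,c_2)$ with $c_1=\frac{m}{m_1}$ and $c_2=\frac{m}{m_2}$, applies Corollary \ref{cor112}, and uses the positivity of both factor curvatures to conclude $c=\min\{c_1,c_2,0\}=0$, so the term $n_1 c$ vanishes. Nothing is missing.
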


\section*{Acknowledgments}

The author is grateful to the referees for their valuable comments and suggestions.

\end{document}